\documentclass{article}
\usepackage{graphicx} % Required for inserting images
\usepackage{biblatex}
\usepackage{graphicx} % Required for inserting images
\usepackage{tikz}
\usepackage{graphicx}
\usepackage{algorithm}
\usepackage{algorithmic}
\usepackage{cmap}
\usepackage{wrapfig}
\usepackage{amssymb}
\usepackage{moreverb}
\usepackage{setspace}
\usepackage{booktabs}
\usepackage{url}
\usepackage{amssymb}
\usepackage[shortlabels]{enumitem}
\usepackage{amsmath}
\usepackage{amsthm}
\usepackage{adjustbox}
\usepackage{tabularx}
\usepackage{tabularray}
\usepackage{multicol}
\usepackage{caption}
\usepackage{listings}
\usepackage{subcaption}
\usepackage{tabularx}
\usepackage[table,usenames,dvipsnames]{xcolor}
\usepackage{a4wide}
\usepackage{rotating}
\usepackage{makecell}
\usepackage{ragged2e}
\usepackage[acronym]{glossaries}
\usepackage{glossary-mcols}
\usepackage{hyperref}
\usepackage{cleveref} 
\usepackage[makeroom]{cancel}

\usepackage{a4wide, amssymb, amsthm, amsmath, amsfonts, graphicx,wrapfig,float}
\usepackage{tikz}

\newtheorem{theorem}{Theorem}[section]

\newtheorem{definition}[theorem]{Definition}
\newtheorem{lemma}{Lemma}

\newtheorem{corollary}[theorem]{Corollary}
\DeclareMathOperator{\PGL}{PGL}
\DeclareMathOperator{\SU}{SU}

\DeclareMathOperator{\tr}{tr}

\DeclareMathOperator{\GL}{GL}
\DeclareMathOperator{\SL}{SL}
\DeclareMathOperator{\PSL}{PSL}

\DeclareMathOperator\Aut{Aut}

\title{A note on the reflexibility of regular maps with  automorphism group $\PGL(2,q)$}
\author{Darius Young}
\date{June 2026}
\begin{document}
\maketitle
\begin{abstract}
    By a short adaptation of an argument of Singerman, we show that for every prime power $q$, every orientably regular map (or hypermap) whose orientation-preserving automorphism group is isomorphic to $\PGL(2,q)$ is reflexible.
\end{abstract}
\section{Introduction}

A \emph{map} $M$ is a cellular embedding of a connected graph into a closed connected surface $S$ such that $S\setminus M$ is homeomorphic to a disjoint union of disks. When the supporting surface of a map is orientable, we also call the map orientable. The orientation-preserving automorphisms of a map $M$ on an orientable surface $S$ form a group $\Aut^+(M)$ acting semiregularly on the arcs (directed edges) of $M$. If this action is transitive, and hence regular, the map is said to be \emph{orientably regular}. In this case $\Aut^+(M)$ is generated by a pair of elements $(r,s)$, where $r$ is a rotation about a face $F$, $s$ is a rotation about a vertex $V$ incident to $F$, and $rs$ is the involution fixing an edge incident to $V$ and $F$. When a regular map admits an orientation-reversing symmetry (reflection), then it is called \emph{reflexible}, and when it does not, $M$ is called \emph{chiral}. The condition that $M$ is reflexible is equivalent to the condition that $\Aut^+(M)$ admits an automorphism inverting both elements of a standard generating pair, that is, there exists some $\phi \in \Aut(\Aut^+(M))$ with $\phi(r)=r^{-1}$ and $\phi(s)=s^{-1}$.

The goal of this paper will be to extend a $1974$ theorem of Singerman \cite{SingermanSymmetries}:

\begin{theorem}[Singerman]
\label{thm:PSLReflex}
   Let $(X,Y)$ be a generating pair of $\PSL(2,q)$ for some prime power $q$. Then there exists an automorphism of $\PSL(2,q)$ mapping $X\mapsto X^{-1}$ and $Y\mapsto Y^{-1}$.
\end{theorem}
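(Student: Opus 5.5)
The plan is to lift to $\SL(2,q)$ and use traces. Choose lifts $A,B\in\SL(2,q)$ of $X,Y$; they are determined up to sign. Consider the triple of traces $(\tr A,\tr B,\tr AB)$, which is well defined up to the sign changes induced by $A\mapsto -A$, $B\mapsto -B$. The key classical fact I would invoke, a Fricke-type lemma, is this: if $(A,B)$ and $(A',B')$ are pairs in $\SL(2,K)$, with $K$ the algebraic closure of $\mathbb{F}_q$, satisfying $\tr A=\tr A'$, $\tr B=\tr B'$, $\tr AB=\tr A'B'$, and if $\tr[A,B]\neq 2$ (equivalently, the pair is irreducible, i.e.\ generates no reducible subgroup), then they are simultaneously conjugate by some element of $\GL(2,K)$. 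The proof is the standard one. Irreducibility lets us conjugate $A$ into a companion-type form, for instance $A=\begin{pmatrix}0&-1\\1&t\end{pmatrix}$. One then checks that the traces determine $B$ up to conjugation by the centralizer of $A$, and this can be normalized.

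Next, apply this to $(A,B)$ and $(A^{-1},B^{-1})$. We have $\tr A^{-1}=\tr A$ and $\tr B^{-1}=\tr B$ in $\SL(2)$. Also $\tr(A^{-1}B^{-1})=\tr((BA)^{-1})=\tr BA=\tr AB$. So all three traces agree. Since $X,Y$ generate $\PSL(2,q)$, the subgroup $\langle A,B\rangle$ maps onto $\PSL(2,q)$. Except for tiny $q$, this group is irreducible, because reducible subgroups are contained in a Borel subgroup, which is solvable. Hence $\tr[A,B]\neq2$, and the same holds for the inverse pair. The lemma then gives $g\in\GL(2,K)$ with $gAg^{-1}=A^{-1}$ and $gBg^{-1}=B^{-1}$.

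The remaining step, which I expect to be the main obstacle, is to descend $g$ so that conjugation by it induces an automorphism of $\PSL(2,q)$. Conjugation by $g$ preserves $\langle A,B\rangle$, which maps onto $\PSL(2,q)$, so it normalizes the preimage $\SL(2,q)$ in $\SL(2,K)$. I would argue that $g$ may be rescaled into $\GL(2,q)$ via a Galois/Hilbert 90 argument. For each Frobenius $\sigma$, the element $\sigma(g)$ also conjugates the pair to its inverse. By absolute irreducibility, $\sigma(g)=\lambda_\sigma g$ with $\lambda_\sigma$ a scalar, and Hilbert 90 lets us rescale so that $g\in\GL(2,q)$. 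Conjugation by $g$ then induces an element of $\PGL(2,q)\le\Aut\PSL(2,q)$ sending $X\mapsto X^{-1}$ and $Y\mapsto Y^{-1}$.

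Finally, I would dispose of the small exceptional cases $q\le 3$ by hand, where $\PSL(2,q)$ is solvable or reducibility questions arise. For $q=2$ the group is $S_3$, and for $q=3$ it is $A_4$. In these cases one checks directly that the inverse-pair automorphism exists: inner automorphisms and the outer automorphism of $A_4$ coming from $S_4$ suffice.
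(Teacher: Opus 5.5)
Your proof is correct and follows essentially the paper's route. Your Fricke-type lemma under the condition $\tr[A,B]\neq 2$ is precisely Macbeath's statement that non-singular trace triples determine the $\SL$-triple up to conjugacy over $\bar{\mathbb{F}}_q$, and you apply it to the same pair of lifts $(A,B)$ and $(A^{-1},B^{-1})$. The Hilbert 90 descent you flag as the main obstacle is valid but unnecessary: as you already note, conjugation by $g$ preserves $\langle A,B\rangle$ and fixes $\pm I$, so it directly induces the required automorphism of $\PSL(2,q)$, which is exactly how the paper concludes.
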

\begin{corollary}
\label{cor:RegMapReflex}
If $M$ is an orientably regular map with automorphism group isomorphic to $\PSL(2,q)$ for some prime power $q=p^n$, then $M$ must be reflexible.
\end{corollary}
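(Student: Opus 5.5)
The corollary follows from Theorem~\ref{thm:PSLReflex} together with the criterion for reflexibility recalled in the introduction. That criterion says an orientably regular map $M$ is reflexible if and only if $\Aut^+(M)$ has an automorphism inverting both elements of a standard generating pair. So the plan is to take the standard pair of $M$, transport it into $\PSL(2,q)$, apply Singerman's theorem there, and transport the resulting automorphism back.

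First, I read ``automorphism group'' in the statement as the orientation-preserving group $\Aut^+(M)$, since that is the group the reflexibility criterion refers to. Fix an isomorphism $\theta\colon \Aut^+(M)\to \PSL(2,q)$. By orientable regularity, $\Aut^+(M)=\langle r,s\rangle$, where $r$ is the face rotation and $s$ is the vertex rotation. Set $X=\theta(r)$ and $Y=\theta(s)$. Since $\theta$ is an isomorphism, $(X,Y)$ is a generating pair of $\PSL(2,q)$.

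Next, Theorem~\ref{thm:PSLReflex} gives $\psi\in\Aut(\PSL(2,q))$ with $\psi(X)=X^{-1}$ and $\psi(Y)=Y^{-1}$. Define $\phi=\theta^{-1}\circ\psi\circ\theta$. This is an automorphism of $\Aut^+(M)$, and
\[
\phi(r)=\theta^{-1}(X^{-1})=r^{-1}, \qquad \phi(s)=\theta^{-1}(Y^{-1})=s^{-1}.
\]
By the equivalence stated in the introduction, $M$ admits an orientation-reversing automorphism, so $M$ is reflexible.

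There is no real obstacle here. The only points to check are that the equivalence between reflexibility and the existence of an inverting automorphism applies to an arbitrary standard generating pair, and that nothing depends on which isomorphism $\theta$ is chosen. Both hold because the conjugated map $\phi$ inverts exactly the pair $(r,s)$ we started with.
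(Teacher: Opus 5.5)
Your proof is correct and is essentially the paper's argument. The paper also takes the standard pair $(r,s)$ generating $\Aut^+(M)\cong\PSL(2,q)$, applies Theorem~\ref{thm:PSLReflex}, and invokes the inverting-automorphism criterion; your explicit transport through the isomorphism $\theta$ just spells out a step the paper leaves implicit.
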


\begin{proof}
    If $\PSL(2,q)$ is isomorphic to $\Aut^+(M)$ then it must be generated by a standard pair of generators $r,s$, and if there is an automorphism of $\Aut^+(M)$ which inverts both standard generators of $\Aut^+(M)$ then $M$ is reflexible.
\end{proof}

The main result of this paper extends \Cref{thm:PSLReflex} by replacing $\PSL(2,q)$ with $\PGL(2,q)$ in the statement. \Cref{thm:PSLReflex} follows readily from a result of Macbeath, and our extension is proved by a similar argument, requiring only a minor modification.

 We begin with some notation following Macbeath in \cite{Macbeath1969}. By an \textit{$\SL$-triple} we will mean an ordered triple $(X,Y,Z)$ of elements of $\SL(2,q)$ such that $XYZ=1$. By a \textit{$q$-triple}, we will mean an ordered triple of elements $(a,b,c)$ in $\mathbb{F}_q$. We associate with each $q$-triple a set $E(a,b,c)$, which is the set of all $\SL$-triples such that $(\tr \;X,\tr\;Y,\tr\;Z)=(a,b,c)$.

 The following lemma is well known and stated but not referenced or proved in \cite{Macbeath1969}. We provide a short proof here for completeness.

\begin{lemma}
\label{lem:representationSL}
Let $G_1$ be the subgroup of $\SL(2,q^2)$ generated by matrices of the form 

$$
\begin{pmatrix}
a & b\\
b^q & a^q
\end{pmatrix}
$$
where $a,b\in\mathbb{F}_{q^2}$ with $aa^q-bb^q=1$. The group $G_1$ is isomorphic to $\SL(2,q)$.
\end{lemma}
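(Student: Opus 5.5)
The idea is that $G_1$ is $\SU(2,q)$ written in a particular basis, and that $\SU(2,q)$ is conjugate to $\SL(2,q)$ inside $\SL(2,q^2)$. So I would look for an explicit $P\in\GL(2,q^2)$ with $P^{-1}\,\SL(2,q)\,P = G_1$.

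First I would check that the set $S$ of matrices of the stated form, with $aa^q-bb^q=1$, is already a group. For the identity take $a=1,b=0$. Multiplying two such matrices gives
\[
\begin{pmatrix} a & b\\ b^q & a^q\end{pmatrix}\begin{pmatrix} c & d\\ d^q & c^q\end{pmatrix}=\begin{pmatrix} ac+bd^q & ad+bc^q\\ b^qc+a^qd^q & b^qd+a^qc^q\end{pmatrix},
\]
and this has the same shape, because $(ac+bd^q)^q=a^qc^q+b^qd$ and $(ad+bc^q)^q=a^qd^q+b^qc$. The determinant is multiplicative, so the condition $aa^q-bb^q=1$ is preserved. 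The inverse is $\begin{pmatrix} a^q & -b\\ -b^q & a\end{pmatrix}$, which is again of this form. Hence $G_1=S$.

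Next, let $\sigma$ be the Frobenius $x\mapsto x^q$ applied entrywise, and let $J=\begin{pmatrix}0&1\\1&0\end{pmatrix}$. A matrix $g\in\SL(2,q^2)$ lies in $S$ exactly when $\sigma(g)=JgJ$. I would then pick $\lambda\in\mathbb{F}_{q^2}$ with $\lambda^{q}\neq\lambda$ and set $P=\begin{pmatrix}1&1\\ \lambda & \lambda^q\end{pmatrix}$. Its determinant $\lambda^q-\lambda$ is nonzero, so $P$ is invertible, and $\sigma(P)=PJ$. Now take $h\in\SL(2,q)$ and put $g=P^{-1}hP$. Since $\sigma(h)=h$,
\[
\sigma(g)=J^{-1}P^{-1}hPJ=JgJ,
\]
using $J^{-1}=J$. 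So $g\in S$. Conversely, if $\sigma(g)=JgJ$, then $h=PgP^{-1}$ satisfies $\sigma(h)=PJ\,JgJ\,JP^{-1}=h$. Thus $h$ has entries in $\mathbb{F}_q$ and determinant $1$. Conjugation by $P$ is therefore a bijective homomorphism $G_1\to\SL(2,q)$, which proves the lemma.

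The only point needing care is finding a $P$ with $\sigma(P)=PJ$, and the Vandermonde-type choice above handles it, including in characteristic $2$. As a sanity check, both groups have order $q(q^2-1)$.
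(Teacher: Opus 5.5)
Your proof is correct, but it takes a different route from the paper. The paper conjugates $G_1$ by $\mathrm{diag}(1,\lambda)$ with $\lambda^{q+1}=-1$ to reach the standard matrix form of $\SU(2,q)$. It then cites the classical isomorphism $\SU(2,q)\cong\SL(2,q)$ from Wilson's book, so all the real content sits in that citation.

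You bypass the unitary group altogether. You characterize the matrices of the given shape as the elements of $\SL(2,q^2)$ fixed by the twisted Frobenius $g\mapsto J\sigma(g)J$. Your Vandermonde-type matrix $P$ satisfies $P^{-1}\sigma(P)=J$, so it converts this twisted Frobenius into the ordinary one and gives $PG_1P^{-1}=\SL(2,q)$ exactly. This is an explicit instance of Hilbert's Theorem 90 for $\GL_2$.

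Your version is self-contained and produces a concrete isomorphism realized by conjugation in $\GL(2,q^2)$. That is exactly what Definition~\ref{def:affine} tacitly uses when it transports the torus $\{\mathrm{diag}(t,t^q): t^{q+1}=1\}$ into $\SL(2,q)$. The paper's version is shorter but leaves that isomorphism implicit.

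A minor remark: your first step, checking by hand that the set of such matrices is closed under products and inverses, is not needed. The two inclusions you prove already show that this set equals $P^{-1}\SL(2,q)P$, hence is a group and equals $G_1$.
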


\begin{proof}
    Note that $G_1$ is isomorphic to $\SU(2,q)$, the group of determinant $1$ unitary matrices with elements in $\mathbb{F}_{q^2}$. Specificaly conjugating $G_1$ by the matrix $(1\;0\;;\;0\;\lambda)$, where $\lambda\in\mathbb{F}_{q^2}$ and $\lambda^{q+1}=-1$ we obtain the standard presentation of $\SU(2,q)$. The lemma thus follows from the fact that $\SU(2,q)$ and $\SL(2,q)$ are isomorphic \cite[pp.~66--67]{Wilson2009}.
\end{proof}

Again, following Macbeath, we define the affine subgroups of $\SL(2,q)$.

\begin{definition}
\label{def:affine}
In $\SL(2,q)$ the set of superdiagonal matrices

$$
\begin{pmatrix}
a & b\\
0 & a^{-1}
\end{pmatrix}
$$

forms a subgroup $A$. The natural map $\phi$ sends $A$ to a subgroup $A_1$ of $\PSL(2,q)$, similarly the set of matrices

$$
\begin{pmatrix}
t & 0\\
0 & t^{q}
\end{pmatrix} \hspace{1cm} t\in \mathbb{F}_{q^2},\;t^{q+1}=1
$$

forms a subgroup of $\SL(2,q^2)$ which can be mapped by the isomorphism of $G_1$ into $\SL(2,q)$ from \Cref{lem:representationSL} followed by $\phi$ to a subgroup $A_2$ of $\PSL(2,q)$. Any subgroup of a conjugate (in $\PSL(2,q)$) of
$A_1$ or $A_2$ is called an affine subgroup of $\PSL(2,q)$.
\end{definition}

Macbeath proved that for every $q$-triple, the set $E(a,b,c)$ is non-empty, \cite[Theorem 1]{Macbeath1969}. He then called a $q$-triple \textit{singular} depending on whether a particular quadratic form associated with that triple is singular, and called an $\SL$-triple \textit{singular} if its $q$-triple of traces is singular. Macbeath went on to derive the following, \cite[Theorem 2]{Macbeath1969}. 

\begin{theorem}[Macbeath]
An $\SL$-triple is singular if and only if its image in $\PSL(2,q)$ generates an \emph{affine} subgroup.
\end{theorem}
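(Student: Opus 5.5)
The statement is Macbeath's theorem, and the plan is to reduce it to a criterion about common eigenvectors. First I will make singularity concrete. For an $\SL$-triple $(X,Y,Z)$ with traces $(a,b,c)$ we have $Z=(XY)^{-1}$, so $c=\tr XY$. The Fricke identity gives
$$\tr [X,Y] = a^2+b^2+c^2-abc-2 .$$
Macbeath's quadratic form is singular exactly when $a^2+b^2+c^2-abc-4=0$, so I will take as the working definition that the triple is singular if and only if $\tr[X,Y]=2$. (If Macbeath's form is normalised differently, this identification is the first thing to check, but the discriminant of his form is this expression.) The theorem then splits into two claims. The first is that $\tr[X,Y]=2$ if and only if $X$ and $Y$ have a common eigenvector in $\overline{\mathbb{F}}_q^{\,2}$. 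The second is that a common eigenvector exists if and only if $\langle X,Y\rangle$ lies in an $\SL(2,q)$-conjugate of $A$ or of the preimage of $A_2$. Since the preimage of a subgroup of $\PSL(2,q)$ contains $\pm I$, which preserves every line, it makes no difference whether we argue in $\SL$ or in $\PSL$.

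For the first claim I will compute directly. The eigenvalues of $X$ lie in $\mathbb{F}_{q^2}$, so over $\mathbb{F}_{q^2}$ we may conjugate $X$ into one of two forms.

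\emph{Diagonalisable case.} Take $X=\mathrm{diag}(\lambda,\lambda^{-1})$ and $Y=\begin{pmatrix} \alpha&\beta\\ \gamma&\delta\end{pmatrix}$. Then $\tr[X,Y]-2=-(\lambda-\lambda^{-1})^2\beta\gamma$. This vanishes exactly when $X$ is scalar, or $\beta=0$, or $\gamma=0$, which is exactly when $X$ and $Y$ share an eigenvector.

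\emph{Non-diagonalisable case.} Take $X=\pm\begin{pmatrix}1&1\\0&1\end{pmatrix}$. Then $\tr[X,Y]-2=\gamma^2$, which vanishes iff $e_1$ is a common eigenvector. In this case $X$ has only the one eigenline, so "$\gamma=0$" is again exactly "common eigenvector exists".

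For the second claim, suppose $v$ is a common eigenvector.

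\emph{The line $\langle v\rangle$ is defined over $\mathbb{F}_q$.} Conjugating by an element of $\SL(2,q)$ that sends $v$ to $e_1$ makes $X$ and $Y$ upper triangular. So $\langle X,Y\rangle$ lies in a conjugate of $A$, and its image lies in a conjugate of $A_1$.

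\emph{The line $\langle v\rangle$ is not defined over $\mathbb{F}_q$.} Apply the Frobenius $\sigma$ entrywise. Because $X,Y$ have entries in $\mathbb{F}_q$, $v^\sigma$ is also a common eigenvector, and it is independent of $v$. Hence $X$ and $Y$ are simultaneously diagonalised by $(v\;v^\sigma)$. Their eigenvalues on $v^\sigma$ are the $q$-th powers of those on $v$, so each of $X,Y$ becomes $\mathrm{diag}(t,t^q)$ with $t^{q+1}=1$ (using $\det=1$). Thus $\langle X,Y\rangle$ lies in a non-split torus $T_v$ of $\SL(2,q)$.

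It remains to see that $T_v$ is $\SL(2,q)$-conjugate to the torus that defines $A_2$ via \Cref{lem:representationSL}. I expect this to be the main obstacle. My plan is to conjugate the diagonal group $\{\mathrm{diag}(t,t^q)\}\subset G_1$ by the explicit matrix $(1\;0\;;\;0\;\lambda)$ and the $\SU\cong\SL$ isomorphism, and check that it becomes the stabiliser of a pair of Frobenius-conjugate lines $\{w,w^\sigma\}$. Then I will use the fact that $\SL(2,q)$ acts transitively on lines $\langle w\rangle\subset\mathbb{F}_{q^2}^2$ not defined over $\mathbb{F}_q$. Indeed, $\GL(2,q)$ is transitive on these $q^2-q$ lines, and the stabiliser of one such line contains elements of every determinant (the norm map onto $\mathbb{F}_q^*$ is surjective), so $\SL(2,q)$ is transitive as well. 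This places $\langle X,Y\rangle$ in a conjugate of the preimage of $A_2$.

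The converse direction runs the same way. Every element of a conjugate of $A$ fixes a common $\mathbb{F}_q$-line, and every element of a conjugate of the torus fixes the two lines $w,w^\sigma$. So if the image of $\langle X,Y\rangle$ is affine, $X$ and $Y$ share an eigenvector. By the first claim $\tr[X,Y]=2$, and so the triple is singular.
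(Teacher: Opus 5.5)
The paper gives no proof of this statement. It quotes it as Macbeath's Theorem~2 and then immediately takes ``generates an affine subgroup'' as the \emph{definition} of a singular $\SL$-triple, declaring a $q$-triple singular if it is the trace triple of a singular $\SL$-triple. Inside the note the statement therefore works as a definition, and all of its content is delegated to Macbeath.

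Your argument is an independent route, and it is sound:
\begin{itemize}
\item The Fricke identity turns Macbeath's condition $a^2+b^2+c^2-abc=4$ into $\tr[X,Y]=2$.
\item Your two computations, $\tr[X,Y]-2=-(\lambda-\lambda^{-1})^2\beta\gamma$ and $\tr[X,Y]-2=\gamma^2$, are correct.
\item The Frobenius argument, together with the transitivity of $\SL(2,q)$ on the $q^2-q$ lines not defined over $\mathbb{F}_q$, correctly identifies ``common eigenvector over $\overline{\mathbb{F}}_q$'' with ``contained in a conjugate of $A$ or of a non-split torus''.
\end{itemize}
What your route adds over the citation is transparency. It shows directly that affineness depends only on the trace triple, which is exactly what makes the paper's redefinition of singular $q$-triples consistent. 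It also works uniformly in every characteristic, including $p=2$.

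The step you flag as the main obstacle is easier than your plan suggests, and the plan is not quite the right tool. Conjugating $\mathrm{diag}(t,t^q)$ by $\mathrm{diag}(1,\lambda)$ changes nothing, and the paper never specifies the isomorphism $\SU(2,q)\cong\SL(2,q)$. Argue intrinsically instead:
\begin{itemize}
\item Under any isomorphism, the preimage of $A_2$ in $\SL(2,q)$ is cyclic of order $q+1$.
\item A generator cannot have an eigenvalue in $\mathbb{F}_q$. If it did, its order would divide $q-1$ or $2p$, and $q+1$ divides neither.
\item Hence its eigenlines are a Frobenius-conjugate pair $\langle w\rangle,\langle w^\sigma\rangle$ defined over $\mathbb{F}_{q^2}$.
\item Comparing orders shows the group equals the full stabiliser $T_w$, which has order $q+1$. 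Your transitivity argument then finishes the step.
\end{itemize}

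One small tidy-up: take the common eigenvector $v$ in $\mathbb{F}_{q^2}^2$. This is possible unless $X,Y\in\{\pm I\}$, which is trivially affine. Otherwise $T_v$ need not be a torus of order $q+1$.
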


% Macbeath defined an affine subgroup of $\PSL(2,q)$ as a subgroup conjugate to a subgroup of one of two particular subgroups $A_1$ and $A_2$, \cite[p.~20, Class~II]{Macbeath1969}. Where $A_1$ is the image of the superdiagonal matrices of $SL(2,q)$ in $PSL(2,q)$ and $A_2$ is a particular cyclic group of matrices obtained via a $PSL(2,q)$ representation of $PSL(2,q^2)$.

In light of this theorem, we will take it as the definition that an $\SL$-triple is singular if it generates an affine subgroup in $\PSL(2,q)$. We will also call a $q$-triple singular if it is the triple of traces of a singular $\SL$-triple. With this terminology established, we can state Macbeath's theorem.

\begin{theorem}[Macbeath]
Let $q=p^n$ be a prime power and let $\SL(2,\bar{q})$ denote the special linear group over the algebraic closure of $\mathbb{F}_q$.

    \begin{enumerate}
        \item If $p\not = 2$ and $(a,b,c)$ is a non-singular $q$-triple, then $E(a,b,c)$ contains two $\SL(2,q)$ conjugacy classes. Any two $\SL$-triples in $E(a,b,c)$ are conjugate in $\SL(2,\bar{q})$.
        \item If $p=2$ and $(a,b,c)$ is a non-singular $q$-triple, then $E(a,b,c)$ contains only one $\SL(2,q)$ conjugacy class.
    \end{enumerate}
\end{theorem}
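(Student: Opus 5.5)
The plan is to reduce the statement to counting orbits. First I count conjugacy classes of pairs $(X,Y)$ with prescribed traces over the algebraic closure. Then I descend to $\mathbb{F}_q$ using the fact that the stabiliser consists only of scalars.

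Since $Z=(XY)^{-1}$ and $\tr Z=\tr(XY)$, an $\SL$-triple in $E(a,b,c)$ is the same thing as a pair $(X,Y)\in \SL(2,q)^2$ with $\tr X=a$, $\tr Y=b$ and $\tr XY=c$. I will use Macbeath's Theorem~1 (non-emptiness) for existence.

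\textbf{Step 1 (non-singular means absolutely irreducible).} I would show that if $(X,Y,Z)$ is non-singular then $X$ and $Y$ have no common eigenvector in $\bar{\mathbb{F}}_q^2$. The argument is by contrapositive. Suppose $\langle X,Y\rangle$ fixes a line over $\bar{\mathbb{F}}_q$. The eigenvalues of $X$ are roots of $t^2-at+1$, so they lie in $\mathbb{F}_{q^2}$, and the common eigenvector can be taken over $\mathbb{F}_{q^2}$. There are two cases.
\begin{itemize}
\item If the line is $\mathbb{F}_q$-rational, the group is conjugate into the superdiagonal group $A$, hence into $A_1$.
\item If not, the line and its Frobenius conjugate are both invariant. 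The group is then diagonalisable over $\mathbb{F}_{q^2}$ with Frobenius-conjugate eigenvalues, which is exactly the torus used to define $A_2$ via \Cref{lem:representationSL}.
\end{itemize}
In both cases the image is affine. The same reasoning excludes $X=\pm I$, since a scalar $X$ together with any $Y$ has a common eigenvector. I expect this identification with Macbeath's $A_1$ and $A_2$, and in particular the bookkeeping with the isomorphism $G_1\cong \SL(2,q)$, to be the main technical obstacle.

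\textbf{Step 2 (uniqueness over $\bar{\mathbb{F}}_q$).} Since $X$ is not scalar, I can choose $v$ that is not an eigenvector of $X$ and use the basis $v, Xv$. In this basis $X=\begin{pmatrix}0&-1\\1&a\end{pmatrix}$. Write $Y=\begin{pmatrix}y_1&y_2\\y_3&y_4\end{pmatrix}$. The conditions are
\[
y_1+y_4=b,\qquad y_2-y_3+ay_4=c,\qquad \det Y=1 .
\]
The remaining freedom is conjugation by the centraliser of $X$, namely the invertible elements of $\bar{\mathbb{F}}_q[X]$. I would show by a direct computation that this freedom can be used to normalise, say, $y_3$, using that $y_3\neq 0$ by absolute irreducibility. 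After this normalisation, $Y$ is uniquely determined by $(a,b,c)$. Hence all such pairs are conjugate under $\GL(2,\bar{\mathbb{F}}_q)$, and also under $\SL(2,\bar{\mathbb{F}}_q)$ after rescaling by a square root of the determinant. This gives the last sentence of part 1.

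\textbf{Step 3 (descent and counting).} By Schur's lemma, Step 1 implies that the stabiliser in $\GL(2,\bar{\mathbb{F}}_q)$ of a non-singular pair is the scalar group. Its Galois cohomology vanishes by Hilbert 90, so by Lang's theorem the $\bar{\mathbb{F}}_q$-orbit meets $\SL(2,q)^2$ in a single $\GL(2,q)$-orbit. This orbit splits into $\SL(2,q)$-orbits indexed by $\GL(2,q)/\SL(2,q)\,Z(\GL(2,q))$. The determinant identifies this quotient with $\mathbb{F}_q^*/(\mathbb{F}_q^*)^2$, because the stabiliser consists of scalars, whose determinants are exactly the squares. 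This quotient has order $2$ for $p$ odd and order $1$ for $p=2$, giving the two $\SL(2,q)$-classes in part 1 and the single class in part 2.
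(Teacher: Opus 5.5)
The paper does not prove this statement (it is quoted from Macbeath), so your argument has to stand on its own. Steps 1 and 3 are sound. Step 2, however, fails as written, and this is more than bookkeeping. Take any $w$ that is not an eigenvector of $X$, work in the basis $(w,Xw)$, and write $Yw=y_1w+y_3Xw$. Then
\[
y_3=\frac{\det(w\mid Yw)}{\det(w\mid Xw)}.
\]
This is a ratio of two binary quadratic forms in $w$ with no common zero, since a common zero would be a common eigenvector.

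Two consequences follow:
\begin{enumerate}
\item $y_3=0$ does occur, namely when $w$ is an eigenvector of $Y$. Absolute irreducibility only forbids $w$ from being a \emph{common} eigenvector, so your justification ``$y_3\neq 0$ by absolute irreducibility'' is wrong.
\item The map $\langle w\rangle\mapsto y_3$ is a degree-two map $\mathbb{P}^1\to\mathbb{P}^1$. So a prescribed value of $y_3$ is in general attained on two distinct lines $\langle w\rangle$. The centraliser of $X$ modulo scalars acts freely, so these lines give two distinct matrices $Y$.
\end{enumerate}
Concretely, substituting $y_3=1$ into your three equations leaves $y_4^2-(a+b)y_4+c+2=0$, which typically has two roots. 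So your normal form does not determine $Y$. Showing that the two candidates are conjugate is exactly the uniqueness statement you set out to prove.

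The repair is to normalise in coordinates adapted to the centraliser of $X$.
\begin{enumerate}
\item If $a\neq\pm2$, diagonalise $X=\mathrm{diag}(\lambda,\lambda^{-1})$ with $\lambda\neq\lambda^{-1}$. Then $\tr Y=b$ and $\tr XY=c$ determine the diagonal entries $u,z$ of $Y$. The off-diagonal entries have product $uz-1$, which is nonzero by absolute irreducibility. The diagonal centraliser rescales them by $t$ and $t^{-1}$, so you may make the upper-right entry equal to $1$, and then $Y$ is unique.
\item If $a=\pm2$, conjugate $X$ to $\pm\begin{pmatrix}1&1\\0&1\end{pmatrix}$. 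The lower-left entry $y_{21}$ of $Y$ is nonzero, and the unipotent centraliser shifts $y_{11}$ by arbitrary multiples of $y_{21}$, so take $y_{11}=0$. Then $y_{22}=b$, $y_{21}=\pm c-b$ and $y_{12}=-1/y_{21}$ are forced.
\end{enumerate}
Alternatively, $I,X,Y,XY$ form a basis of $M_2$ by absolute irreducibility. Their structure constants are polynomials in $a,b,c$, coming from $X^2=aX-I$, $Y^2=bY-I$ and $YX=-XY+bX+aY+(c-ab)I$. Skolem--Noether then gives the conjugacy directly, even by an element of $\GL(2,q)$. With either fix, your Steps 1 and 3 complete the argument.
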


We now see how Singerman's theorem follows from Macbeath's. Let $\PSL(2,q)$ be generated by two elements $x$ and $y$. Taking $z=(xy)^{-1}$, we can choose elements $X,Y,Z$ in $\SL(2,q)$ as the preimages of $x,y,z$ under the canonical projection map $\pi:\SL(2,q)\rightarrow \PSL(2,q)$. The matrices $X,Y,Z$ can be chosen so that $(X,Y,Z)$ is an $\SL$-triple. This triple will clearly not generate an affine subgroup of $\SL(2,q)$, this can be seen by the fact that $\PSL(2,q)$ is not soluble for $q>3$. The exceptional cases $\PSL(2,2)$ and $\PSL(2,3)$ can be checked individually by comparing orders with $A_1$ and $A_2$ from Definition \ref{def:affine}. So the $q$-triple $(\tr X, \tr Y,\tr Z)$ is non-singular. Furthermore, since $X$ and $Y$ have determinant $1$, we know that $\tr X = \tr X^{-1}$ and $\tr Y=\tr Y^{-1}$, so both $(X,Y,Z)$, and $(X^{-1},Y^{-1},YX)$ belong to $E(\tr X, \tr Y,\tr Z)$. Hence, by Macbeath's theorem, these two $\SL$-triples are conjugate in $\SL(2,\bar{q})$, or in $\SL(2,q)$ if $q$ is even. It follows that there is an automorphism $\alpha$ of $\SL(2,\bar{q})$ mapping $X\mapsto X^{-1}$ and $Y\mapsto Y^{-1}$. Thus $\alpha$ restricts to an automorphism of $\langle X,Y\rangle$, and projects to an automorphism of $\PSL(2,q)$ mapping $x\mapsto x^{-1}$ and $y\mapsto y^{-1}$. Note that we don't actually require that $x$ and $y$ generate $\PSL(2,q)$, but only that $X,Y$ do not generate an affine subgroup of $\SL(2,q)$.

It turns out that the above argument works in almost the same way for $\PGL(2,q)$, with the only extra step being that in order to use Macbeath's theorem, we first embed $\PGL(2,q)$ into the special linear group over the quadratic extension of $\mathbb{F}_q$.

\begin{theorem}
If $(x,y)$ is a generating pair of $\PGL(2,q)$ for some prime power $q=p^n$, then there exists an automorphism of $\PGL(2,q)$ mapping $x\mapsto x^{-1}$ and $y\mapsto y^{-1}$.
\end{theorem}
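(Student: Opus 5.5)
We embed $\PGL(2,q)$ into $\PSL(2,q^2)$ and then repeat the argument used above for \Cref{thm:PSLReflex}, with $q^2$ in place of $q$. The embedding comes from scaling. Any $g\in\GL(2,q)$ can be multiplied by a scalar $\mu\in\mathbb{F}_{q^2}$ with $\mu^2=\det(g)^{-1}$, because every element of $\mathbb{F}_q$ is a square in $\mathbb{F}_{q^2}$. Then $\mu g\in\SL(2,q^2)$. Passing to $\PSL(2,q^2)$, the class of $\mu g$ depends only on the class of $g$ in $\PGL(2,q)$, since $\pm\mu$ and scalar multiples of $g$ give the same image. This yields an injective homomorphism $\iota:\PGL(2,q)\to\PSL(2,q^2)$. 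Write $\hat x=\iota(x)$ and $\hat y=\iota(y)$.

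Next I lift $\hat x,\hat y,(\hat x\hat y)^{-1}$ to an $\SL$-triple $(X,Y,Z)$ over $\mathbb{F}_{q^2}$, with $XYZ=1$. As before, $\tr X^{-1}=\tr X$ and $\tr Y^{-1}=\tr Y$. So $(X^{-1},Y^{-1},YX)$ lies in the same set $E(\tr X,\tr Y,\tr Z)$; here $\tr(YX)=\tr(XY)=\tr Z^{-1}=\tr Z$. The substantive step is to show that this $\SL$-triple is non-singular, meaning that $\langle \hat x,\hat y\rangle=\iota(\PGL(2,q))$ is not an affine subgroup of $\PSL(2,q^2)$.

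For $q\ge 4$ this is clear. Every affine subgroup is metabelian: it lies in a conjugate of the image of the upper triangular group, which is soluble, or in a cyclic group. On the other hand $\PGL(2,q)$ contains the non-soluble group $\PSL(2,q)$. The small cases need separate treatment. For $q=2$ we have $\PGL(2,2)\cong S_3$, and for $q=3$ we have $\PGL(2,3)\cong S_4$. I will rule these out by comparing them with the affine subgroups of $\PSL(2,4)$ and $\PSL(2,9)$.

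\begin{itemize}
\item For $q=3$: $S_4$ is not metabelian and has no normal Sylow $3$-subgroup. Every subgroup of a conjugate of $A_1$ in $\PSL(2,9)$ has a normal Sylow $3$-subgroup, and every subgroup of $A_2$ is cyclic. So $S_4$ is not affine.
\item For $q=2$: in $\PSL(2,4)\cong A_5$, the conjugates of $A_1$ are the Borel subgroups $C_2^2\rtimes C_3\cong A_4$, and $A_2$ is cyclic of order $3$ or $5$. Neither contains $S_3$, so $S_3$ is not affine.
\end{itemize}

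This verification of the small cases is where I expect the only real care to be needed. In particular I must make sure the definition of $A_2$ is applied with $q^2$ in place of $q$.

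Once non-singularity is established, Macbeath's theorem says that $(X,Y,Z)$ and $(X^{-1},Y^{-1},YX)$ are conjugate in $\SL(2,\bar q)$, say by a matrix $T$. Conjugation by $T$ maps $\langle X,Y\rangle$ onto itself, since it sends the generators $X,Y$ to $X^{-1},Y^{-1}$, which lie in $\langle X,Y\rangle$. It also commutes with $-I$. Hence it induces an automorphism of the image $\langle\hat x,\hat y\rangle=\iota(\PGL(2,q))$ that inverts $\hat x$ and $\hat y$. Pulling back through $\iota$ gives the required automorphism of $\PGL(2,q)$ with $x\mapsto x^{-1}$ and $y\mapsto y^{-1}$.

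As in the remark after \Cref{thm:PSLReflex}, the argument uses only that $\langle X,Y\rangle$ is not affine, not that $x,y$ generate a particular group. The corresponding corollary for orientably regular maps with $\Aut^+(M)\cong\PGL(2,q)$ then follows exactly as \Cref{cor:RegMapReflex} does.
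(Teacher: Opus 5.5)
Your proof is correct and follows the paper's argument essentially step for step. You use the same embedding $\PGL(2,q)\hookrightarrow\PSL(2,q^2)$ by rescaling with a square root of $\det^{-1}$. You get non-affineness from insolubility for $q\ge 4$, and from a direct check against conjugates of $A_1$ and the cyclic $A_2$ in $\PSL(2,q^2)$ for $q\in\{2,3\}$, as the paper does. Then Macbeath's conjugacy in $\SL(2,\bar q)$ is projected down and pulled back through $\iota$. (A small point: in $\PSL(2,4)$ the group $A_2$ has order exactly $5$, though only its cyclicity is used.)
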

\begin{proof}
Let $x,y\in \PGL(2,q)$ be a generating pair and let $z=(xy)^{-1}$, and let $Z_1$ and $Z_2$ be the respective centres of $\GL(2,q)$ and $\SL(2,q^2)$. Since $\mathbb{F}_q$ is a subfield of $\mathbb{F}_{q^2}$ we may embed $\PGL(2,q)$ into $\PSL(2,q^2)$ via the homomorphism that takes any element $AZ_1\in \PGL(2,q)$ to $\sqrt{\det(A)}^{-1}AZ_2\in \PSL(2,q^2)$, note that both the positive and negative roots map to the same element of $\PSL(2,q^2)$. Let $\hat{x},\hat{y},\hat{z}$ be the images of $x$, $y$, and $z$ under this homomorphism. Then choose elements $X,Y,Z\in\SL(2,q^2)$ from the preimages of $\hat{x},\hat{y},\hat{z}$ under the canonical projection $\pi:\SL(2,q^2)\rightarrow \PSL(2,q^2)$ so that $XYZ=1$. Note that $x$ and $y$ generate $\PGL(2,q)$, which is insoluble for $q>3$ and hence cannot be affine. Let $A_1$ and $A_2$ as in Defintion \ref{def:affine} be subgroups of $\PSL(2,q^2)$. We check that for $q\in\{2,3\}$ $\PGL(2,q)$ is still not affine, note that $A_1$ has a normal Sylow $p$-subgroup while the $\PGL(2,q)$ (isomorphic to $S_3$, $S_4$) does not, so is not contained in a conjugate of $A_1$. Since $A_2$ is cyclic, again no conjugate can contain $\PGL(2,q)$. So $(X,Y,Z)$ is a non-singular $\SL$-triple, and both $(X,Y,Z)$ and $(X^{-1},Y^{-1},YX)$ are elements of $E(\tr X,\tr Y,\tr Z)$. By Macbeath's theorem, these triples are conjugate in $\SL(2,\bar{q})$, and so there is an automorphism of $\SL(2,\bar{q})$ mapping $X\mapsto X^{-1}$ and $Y\mapsto Y^{-1}$, and then since the centre of a group is characteristic, this automorphism projects to an automorphism of $\PGL(2,q)$ mapping $x\mapsto x^{-1}$ and $y\mapsto y^{-1}$.
\end{proof}

\begin{corollary}
If $M$ is a regular map with automorphism group isomorphic to $\PGL(2,q)$ for some prime power $q$, then $M$ must be reflexible.
\end{corollary}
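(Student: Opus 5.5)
The corollary follows the same route as \Cref{cor:RegMapReflex}, with the preceding theorem on $\PGL(2,q)$ playing the role of \Cref{thm:PSLReflex}. The regular map $M$ is orientably regular, so I read ``automorphism group'' as $\Aut^+(M)$, in line with the abstract and \Cref{cor:RegMapReflex}. Fix an isomorphism $\theta:\Aut^+(M)\to\PGL(2,q)$.

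First I use that $\Aut^+(M)$ acts regularly on arcs. Hence it is generated by a standard pair $(r,s)$: $r$ is the rotation about a face $F$, $s$ is the rotation about a vertex $V$ incident to $F$, and $rs$ is the involution fixing an edge incident to $V$ and $F$. So $(\theta(r),\theta(s))$ is a generating pair of $\PGL(2,q)$.

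Next I apply the preceding theorem to this generating pair. It gives $\psi\in\Aut(\PGL(2,q))$ with $\psi(\theta(r))=\theta(r)^{-1}$ and $\psi(\theta(s))=\theta(s)^{-1}$. Pulling back, $\phi=\theta^{-1}\psi\theta$ is an automorphism of $\Aut^+(M)$ with $\phi(r)=r^{-1}$ and $\phi(s)=s^{-1}$.

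Finally I invoke the criterion from the introduction. $M$ is reflexible exactly when $\Aut^+(M)$ has an automorphism inverting both elements of a standard generating pair, so $M$ is reflexible. The same argument works verbatim for regular hypermaps, whose orientation-preserving group is likewise generated by a pair that must be simultaneously inverted.

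There is no real obstacle here; all the substance lies in the preceding theorem. The only point needing care is the correspondence between reflections of $M$ and automorphisms inverting the standard generators. The introduction takes this as known, and it is standard: $M$ corresponds to a normal subgroup of the triangle group, and an automorphism inverting $r$ and $s$ lifts to the reflection of the extended triangle group normalising that subgroup.
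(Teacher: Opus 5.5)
Your proposal is correct and is exactly the paper's argument. The paper proves this corollary simply by saying it is analogous to \Cref{cor:RegMapReflex}: take a standard generating pair $(r,s)$ of $\Aut^+(M)\cong\PGL(2,q)$, apply the preceding theorem to obtain an automorphism inverting both generators, and conclude via the reflexibility criterion from the introduction.
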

\begin{proof}
    Analogous to the proof of \Cref{cor:RegMapReflex}.
\end{proof}

\printbibliography

\end{document}